\newtheorem{theorem}{Theorem}[section]
\newtheorem{lemma}[theorem]{Lemma}
\begin{document}

%\begin{center}
\author{Georgii Khantarzhiev}
\title{\textbf{Application of difference sequences theory}} %\\
%                 and\\
%                 factorial}}
\date{January 14, 2014}
\maketitle

%    \textbf{Title }

\phantom{x}

%\end{center}

\begin{abstract}

%Difference sequences for power sequences are connected with the factorial of the power. I prove it.
%The results obtained are applied to analytic function theory.
The results of difference sequences theory are applied to analytic function theory and Diophantine equations.
As a result we have the equation which connects the $n$-th derivative of a function with the difference sequence for the values of this function. Also the
results of difference sequences theory helps to discover some features of the whole kind of Diophantine equations.
The method presented allows to find limits where Diophantine equation does not have integer solutions. The higher
power of Diophantine equation the better this method works.

\end{abstract}

\begin{center}
%{\itshape  - �� �������� �������� �����
\section{Introduction}
%}
\end{center}

I give the short introduction to difference sequences theory below to show the main idea.
Let's have a look at the following system:

$$
\setcounter{MaxMatrixCols}{20}
\begin{matrix}
    &&&&& \hdotsfor[3]{3} \\
    &&&& \vartriangle^{3} \! u_{0}, && \vartriangle^{3} \! u_{1}, && \ldots\\
    &&& \vartriangle^{2} \! u_{0}, && \vartriangle^{2} \! u_{1}, && \vartriangle^{2} \! u_{2}, &&\ldots\\
    && \vartriangle \! u_{0}, && \vartriangle \! u_{1}, && \vartriangle \! u_{2}, && \vartriangle \! u_{3}, && \ldots\\
    & u_{0}, && u_{1}, && u_{2}, && u_{3}, && u_{4}, && \ldots\\
\end{matrix}
$$
\\
where
\begin{align*}
\phantom{x}&  & \vartriangle \!\! u_{k}& = u_{k+1} - u_{k}, & u_{k}& = \, \vartriangle^{0} \!\! u_{k}, & \phantom{x}&  \\%���� ���������� - �������
\phantom{x}&  & \vartriangle^{n+1} \!\! u_{k}& = \, \vartriangle^{n} \!\! u_{k+1} \, - \vartriangle^{n} \!\! u_{k} & k& = 0, 1, 2, 3, \ldots\ & \phantom{x}&
\end{align*}

In accordance with the above definitions let's take a look at the following number sequences:

$$
u_{k} = k\,,\phantom{^{2}} \; \; \; \; k = 0, 1, 2, 3, \ldots
$$

$$
\setcounter{MaxMatrixCols}{20}
\begin{matrix}
&& 1, && 1, && 1, && 1, && 1, && 1, && \ldots\\
& 0, && 1, && 2, && 3, && 4, && 5, && 6, && \ldots\\
\end{matrix}
$$

$$
u_{k} = k^{2}\,, \; \; \; \; k = 0, 1, 2, 3, \ldots
$$

$$
\setcounter{MaxMatrixCols}{20}
\begin{matrix}
&&& 2, && 2, && 2, && 2, && 2, && \ldots\\
&& 1, && 3, && 5, && 7, && 9, && 11, && \ldots\\
& 0, && 1, && 4, && 9, && 16, && 25, && 36, && \ldots\\
\end{matrix}
$$

$$
u_{k} = k^{3}\,, \; \; \; \; k = 0,1,2,3, \ldots
$$
\\
$$
\setcounter{MaxMatrixCols}{20}
\begin{matrix}
&&&& 6, && 6, && 6, && 6, && \ldots\\
&&& 6, && 12, && 18, && 24, && 30, && \ldots\\
&& 1, && 7, && 19, && 37, && 61, && 91, && \ldots\\
& 0, && 1, && 8, && 27, && 64, && 125, && 216, && \ldots\\
\end{matrix}
$$
etc. any $u_{k}=k^{n}$, $n\in\mathbb{N}$.
We can suppose that for the following number sequences: $u_{k} = k^{n}$, $k\in\mathbb{Z},$
$$
\vartriangle^{n} \!\! u_{k} = n!\,, \; \; \; \;  n = 1, 2, 3, \ldots %��� n=0 � ��� ���������� �� ����������! �.�. 1 - 1 = 0 ��� ���� k^{0} (���� � ������� ���� ��� 1???)
$$

Newton researched such difference sequences and proved the following theorem

                             \begin{theorem}

For polynomial $P_{n}(x) = a_{0} x^{n} \, + \, a_{1} x^{n-1} \, + \cdots \, + \, a_{n},$ where $x\in\mathbb{R}, \: a_{0}, a_{1}, \ldots , a_{n} \in\mathbb{R}, \: a_{0} \neq 0, \: n\in\mathbb{N},$
$k$ is the step of difference sequence, $k\in\mathbb{R} , \: k \neq 0$

\begin{equation} \label{1}
{\vartriangle^{n} \!\! P_{n}(x)} = \sum^{n}_{i = 0} \binom{n}{i} (-1)^{i} P_{n} (x - ki) = a_{0} k^{n} n!
\end{equation}

                             \end{theorem}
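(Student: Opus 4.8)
The plan is to establish the two equalities in \eqref{1} separately: the combinatorial expansion of the iterated difference operator, and then the evaluation of that expansion on a polynomial of degree $n$. Throughout, fix the nonzero step $k$ and let $\vartriangle$ denote the step-$k$ difference operator on functions of a real variable, $\vartriangle f(x) = f(x) - f(x-k)$, which is precisely the $n=1$ instance of \eqref{1}. For the first equality I would show, by induction on $n$ and for \emph{every} function $f$, that $\vartriangle^n f(x) = \sum_{i=0}^n \binom{n}{i}(-1)^i f(x-ki)$. The base case $n=1$ is the definition. For the inductive step, apply $\vartriangle$ to the formula for $\vartriangle^n$ and use linearity together with $\vartriangle[f(x-ki)] = f(x-ki) - f(x-k(i+1))$ to get
$$\vartriangle^{n+1} f(x) = \sum_{i=0}^n \binom{n}{i}(-1)^i f(x-ki) \;-\; \sum_{i=0}^n \binom{n}{i}(-1)^i f(x-k(i+1));$$
reindex the second sum by $i \mapsto i-1$ and merge the coefficients of each $f(x-ki)$ via Pascal's identity $\binom{n}{i}+\binom{n}{i-1}=\binom{n+1}{i}$. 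The boundary index $i=0$ (present only in the first sum) contributes $\binom{n+1}{0}$ and the index $i=n+1$ (present only in the reindexed second sum) contributes $\binom{n+1}{n+1}$, so the two sums close up into $\sum_{i=0}^{n+1}\binom{n+1}{i}(-1)^i f(x-ki)$, completing the induction.

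For the second equality I would isolate a degree-reduction lemma: if $Q$ is a real polynomial with $\deg Q = m \ge 1$ and leading coefficient $b$, then $\vartriangle Q$ is a polynomial with $\deg(\vartriangle Q) = m-1$ and leading coefficient $mkb$; while if $\deg Q = 0$ then $\vartriangle Q \equiv 0$. This is immediate from expanding $b(x-k)^m = bx^m - bmkx^{m-1} + \cdots$ by the binomial theorem: in $Q(x) - Q(x-k)$ the $x^m$ terms cancel, the coefficient of $x^{m-1}$ equals $bmk \ne 0$, and every remaining contribution has degree at most $m-2$. Applying this lemma repeatedly to $P_n$, which has degree $n$ and leading coefficient $a_0 \ne 0$, one gets by induction that for $0 \le j \le n$ the polynomial $\vartriangle^j P_n$ has degree $n-j$ and leading coefficient $a_0 k^j\, n(n-1)\cdots(n-j+1)$. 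Taking $j = n$: the polynomial $\vartriangle^n P_n$ has degree $0$, hence is a constant equal to its leading coefficient $a_0 k^n\, n(n-1)\cdots 1 = a_0 k^n n!$, which is the right-hand side of \eqref{1}.

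There is no deep obstacle here; the argument is pure bookkeeping, so the point is to be careful. In the first part the likely slip is mishandling the two boundary indices when merging the sums through Pascal's rule; in the second part one must index the falling-factorial recursion so that after exactly $n$ applications the accumulated factor is $n! = n(n-1)\cdots 1$ and not an off-by-one variant, and one should note that $n-j+1 \ge 1$ for $j \le n$ guarantees the leading coefficient never vanishes along the way. As an alternative to the degree-reduction lemma one could expand $\sum_{i=0}^n \binom{n}{i}(-1)^i P_n(x-ki)$ directly and invoke the identity $\sum_{i=0}^n \binom{n}{i}(-1)^i p(i) = 0$ for every polynomial $p$ with $\deg p < n$; but that identity is essentially the statement being proved, so the self-contained induction above is preferable.
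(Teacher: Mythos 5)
Your proof is correct, but there is nothing in the paper to compare it against: Theorem 1.1 is attributed to Newton and delegated to the combinatorics reference~\cite{Combinatorics}, and no proof is given in the text. Your two-step route --- first the inductive binomial expansion of the iterated difference operator, closing the sums with Pascal's identity and checking the two boundary indices, then the degree-reduction lemma showing that one application of $\vartriangle$ turns a degree-$m$ polynomial with leading coefficient $b$ into a degree-$(m-1)$ polynomial with leading coefficient $mkb$ --- is the standard argument and is carried out without gaps; after $n$ applications the accumulated factor is indeed $a_{0}k^{n}n!$ and the result is a constant. One point of convention worth flagging: the paper's introductory definition $\vartriangle u_{k} = u_{k+1} - u_{k}$ is a forward difference, whereas the sum $\sum^{n}_{i=0}\binom{n}{i}(-1)^{i}P_{n}(x-ki)$ in \eqref{1} is the $n$-fold iterate of the backward difference $f(x)-f(x-k)$. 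You silently adopt the backward convention so that the $n=1$ case coincides with the displayed sum, which is the natural reading of the theorem as stated; and since the forward and backward $n$-th differences of a degree-$n$ polynomial are the same constant $a_{0}k^{n}n!$, the discrepancy is harmless for the value asserted in \eqref{1}.
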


For example you can familiarize with difference sequences theory in ~\cite{Combinatorics}.

\begin{center}
\item{
\section{Analytic Functions}
}
\end{center}
% ���������� ������������ �����, ����� ���� �������, ���� �� ���� � ��� ����. (�� ����! 15.02.2012)

Apply this result to analytic functions. Let $f(x)$ be a real analytic function in the interval $(a,b)$. In such case by Taylor's theorem it can be written %in the real line

$$
f(x) = f(x_{0}) + \frac{f'(x_{0})}{1!}(x - x_{0}) + \cdots + \frac{f^{(n)}(x_{0})}{n!}(x - x_{0})^n + \frac{f^{(n+1)}(\xi)}{(n+1)!}(x - x_{0})^{n+1},
$$
where $x,x_{0},\xi\in(a,b)$ and the remainder has Lagrangian form.    %���������� ���� ������� � ����� �������� Lagrange (checked)(done! 15.02.2012)

For $i=0,1,2,\ldots, n$ and $k\in\mathbb{R} , \: k \neq 0$ : $x + ik\in(a,b)$

\begin{multline*}
f(x+ik) = f(x_{0}) + \frac{f'(x_{0})}{1!}(x - x_{0} + ik) + \cdots \\
+ \frac{f^{(n)}(x_{0})}{n!}(x - x_{0} + ik)^{n} + \frac{f^{(n+1)}(\xi_{i})}{(n+1)!}(x - x_{0} + ik)^{n+1},
\end{multline*}
where $\xi_{i}\in(x_{0}, x+ik).$ Then we can put

$$
P'_{n}(x+ik) = \frac{f^{(n)}(x_{0})}{n!}(x - x_{0} + ik)^{n} + \cdots + \frac{f'(x_{0})}{1!}(x - x_{0} + ik) + f(x_{0}),
$$
hence

\begin{multline*}
\sum^{n}_{i=0}\binom{n}{i} (-1)^{n-i} f(x+ik) = \sum^{n}_{i=0}\binom{n}{i} (-1)^{n-i} P'_{n}(x+ik) + \\
+ \sum^{n}_{i=0}\binom{n}{i} (-1)^{n-i} \frac{f^{(n+1)}(\xi_{i})}{(n+1)!}(x - x_{0} + ik)^{n+1},
\end{multline*}

by Theorem 1.1 and taking into account the sign at \eqref{1}

$$
\sum^{n}_{i=0}\binom{n}{i}(-1)^{n-i} P'_{n}(x+ik) = f^{(n)}(x_{0})k^{n},
$$

$$
%\begin{equation}\label{9}
f^{(n)}(x_{0})(-k)^{n} = \sum^{n}_{i=0}\binom{n}{i} (-1)^{i} f(x+ik) - \sum^{n}_{i=0}\binom{n}{i} (-1)^{i} \frac{f^{(n+1)}(\xi_{i})}{(n+1)!}(x - x_{0} + ik)^{n+1}.
%\end{equation}
$$

This equation is the generalized form of \eqref{1} for functions.
The equation shows a connection between the $n$-th derivative of a function in a point $x_{0}$ and the difference sequence for this function which begins in other point $x$.

%If $k < 0$ equation \eqref{9} takes form
%
%$$
%f^{(n)}(x_{0})k^{n} = \sum^{n}_{i=0}\binom{n}{i} (-1)^{i} f(x-ik) - \sum^{n}_{i=0}\binom{n}{i} (-1)^{i} \frac{f^{(n+1)}(\xi_{i})}{(n+1)!}(x - x_{0} - ik)^{n+1},
%$$

In the case $x = x_{0}$ we have

$$
f^{(n)}(x_{0})(-k)^{n} = \sum^{n}_{i=0}\binom{n}{i} (-1)^{i} f(x_{0}+ik) - \sum^{n}_{i=1}\binom{n}{i} (-1)^{i} \frac{f^{(n+1)}(\xi_{i})}{(n+1)!}(ik)^{n+1}.
$$

%------------------------------------------SECTION 4---------------------------------------------------------------

\begin{center}
\item{
\section{Diophantine equations}
}
\end{center}

Lets apply the result of difference sequences theory to Diophantine equations. For example lets take Fermat's Last Theorem ~\cite{Fermat's Last Theorem} which was proven by Wiles in 1995 ~\cite{Wiles}.
Fermat's Last Theorem states that no there positive integers $x,y$ and $z$ can satisfy the equation $x^{n} + y^{n} = z^{n}$ for any integer value of $n$
greater than two.

Let $x > y$. For $x^{n}$ we can write

$$
n! = \sum^{n}_{i=0} \binom{n}{i} (-1)^{n-i} (x + i)^{n} = \sum^{n}_{i=0} \binom{n}{i} (-1)^{n-i} (x' + y + i)^{n},
$$
where $x' = x - y$. For $y^{n}$ we can write

$$
n! = \sum^{n}_{k=0} \binom{n}{k} (-1)^{n-k} (y + k)^{n},
$$

By adding these two sums we get the equation for $z^{n}$.
%�������� � ������ ��� by adding

\begin{equation}\label{10}
2n! = \binom{n}{n} (-1)^{0} z^{n}_{y+n}(x') - \binom{n}{n-1} (-1)^{1} z^{n}_{y+n-1}(x') + \cdots + \binom{n}{0} (-1)^{n} z^{n}_{y}(x')
\end{equation}
where $z^{n}_{p}(x',n) = (x' + p)^{n} + p^{n}$, $p = 0,1,2,3 \ldots \; .$

Signify

$$
z_{p}(x',n) = \sqrt[n]{z^{n}_{p}(x',n)}.
$$

Notice that \eqref{10} is right for $\forall$  $x', p$. It means the function $z_{p}(x',n)$ behaves by certain way for $\forall$  $x', p$.
In \eqref{10} the coefficient before $n!$ is equal $2$. Lets have a look at \eqref{1} we have to take step $k = \sqrt[n]{2}$ in \eqref{1} to get coefficient
$2$ before $n!$ in the right part. The step of the function $z_{p}(x',n)$ is not certainly a constant, but it has to behave by certain way to satisfy \eqref{10}. The behaviour of this step may give us some information about possible integer roots of this Diophantine equation.

%Notice that the coefficient before $n!$ is equal two in \eqref{10}, it shows the behaviour of the function $z_{p}(x',n) = \sqrt[n]{z^{n}_{p}(x',n)}$
%and this can be useful. If the step of function $z_{p}(x',n)$ was more than $\sqrt[n]{2}$ for $\forall$  $x', p$, then according to !!! the coefficient before $n!$
%would be more than $2$ in the left part of equation \eqref{10}. But the coefficient in \eqref{10} is equal $2$. This fact gives us a useful information
%about the step of $z_{p}(x',n)$.

%For $x' \geq 1$ obviously that % �� ��������� (8) � (9) ������� ��� ��� ������� z ������ ��� ������ �� 2-�.

Lets have a look at the following function

$$
Step(x',p,n) = z_{p+1}(x',n) - z_{p}(x',n),
$$

where $x' \geq 1$, $(x>y)$. %and %$1 < Step(x',p,n) < \sqrt[n]{2}$.

\begin{lemma}

Prove that

$$
1 < Step(x',p,n) < \sqrt[n]{2}.
$$

\end{lemma}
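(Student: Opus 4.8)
The plan is to realise $Step(x',p,n)$ as an integral of a derivative and then to pin that derivative between $1$ and $\sqrt[n]{2}$ using two classical facts about $\ell^p$-norms. Throughout I assume $n\ge 2$ (for $n=1$ the quantity equals $2=\sqrt[1]{2}$ and the statement degenerates), put $a=x'+p\ge 1$ and $b=p\ge 0$, and set
$$
f(t)=\bigl((a+t)^n+(b+t)^n\bigr)^{1/n},\qquad t\in[0,1].
$$
Since $a+t\ge 1$, the expression under the root is at least $1$, so $f$ is smooth on $[0,1]$, and by definition $Step(x',p,n)=z_{p+1}(x',n)-z_p(x',n)=f(1)-f(0)=\int_0^1 f'(t)\,dt$. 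Writing $c=a+t$ and $d=b+t$, a direct differentiation gives
$$
f'(t)=\frac{c^{\,n-1}+d^{\,n-1}}{(c^n+d^n)^{(n-1)/n}}=\left(\frac{\|(c,d)\|_{n-1}}{\|(c,d)\|_{n}}\right)^{n-1},
$$
so it is enough to show $1\le f'(t)\le\sqrt[n]{2}$ on $[0,1]$, each inequality being strict outside a set of measure zero.

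For the lower bound, recall that for a fixed vector the map $p\mapsto\|v\|_p$ is non-increasing on $[1,\infty)$, and strictly decreasing once $v$ has at least two non-zero coordinates. Since $n-1\ge 1$, this yields $\|(c,d)\|_{n-1}\ge\|(c,d)\|_n$ and hence $f'(t)\ge 1$. Moreover $c=a+t\ge 1>0$ always, while $d=p+t$ vanishes only in the single case $p=0$, $t=0$; for every other $t\in[0,1]$ both coordinates are positive and the inequality is strict, so $\int_0^1 f'(t)\,dt>1$.

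For the upper bound, use the concavity of $s\mapsto s^{(n-1)/n}$ on $[0,\infty)$ (equivalently the power-mean inequality). Since $f'(t)$ depends only on the direction of $(c,d)$, we may rescale so that $c^n+d^n=1$; then
$$
f'(t)=c^{\,n-1}+d^{\,n-1}=(c^n)^{(n-1)/n}+(d^n)^{(n-1)/n}\le 2\Bigl(\tfrac{c^n+d^n}{2}\Bigr)^{(n-1)/n}=2^{1/n},
$$
with equality only if $c^n=d^n$, i.e.\ $c=d$, i.e.\ $x'=0$. But $x'\ge 1$ forces $c=x'+p+t>p+t=d$ for all $t$, so the inequality is strict throughout the (compact) interval $[0,1]$, whence $\int_0^1 f'(t)\,dt<\sqrt[n]{2}$. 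Combining the two estimates gives $1<Step(x',p,n)<\sqrt[n]{2}$.

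The only place that genuinely needs attention is the strictness bookkeeping: the hypothesis $x'\ge 1$ is exactly what separates $c$ from $d$ and thereby makes the upper bound strict, while the single exceptional pair $(p,t)=(0,0)$ in the lower bound is harmless because it has measure zero. As an alternative to the calculus step, one can instead get the upper bound directly from Minkowski's inequality for the vectors $(a,b)$ and $(1,1)$ (strict since they are not proportional when $x'\ge 1$) and the lower bound from the supporting-hyperplane inequality for the convex function $(a,b)\mapsto\|(a,b)\|_n$; this trades differentiation for convexity but relies on the same monotonicity fact for $\ell^p$-norms. Either route is routine once those two inequalities are in hand.
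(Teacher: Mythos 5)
Your proof is correct, and it takes a genuinely different route from the paper's. You write $Step(x',p,n)=\int_0^1 f'(t)\,dt$ with $f(t)=((x'+p+t)^n+(p+t)^n)^{1/n}$ and then bound the integrand pointwise: the lower bound $f'(t)\ge 1$ comes from the monotonicity of $\ell^q$-norms in $q$ (comparing the $(n-1)$-norm with the $n$-norm of $(c,d)$), and the upper bound $f'(t)\le 2^{1/n}$ from concavity of $s\mapsto s^{(n-1)/n}$ after normalizing $c^n+d^n=1$, with strictness supplied exactly by $x'\ge 1$ (which forces $c>d$) and by the fact that the single point where $d=0$ has measure zero. The paper instead argues in three stages: it claims $Step(x',p,n)$ is increasing in $p$, obtains the upper bound by computing $\lim_{p\to\infty}Step(x',p,n)$, and obtains the lower bound by estimating $Step(x',1,n)$ asymptotically in $x'$ and treating $p=0$ separately by an algebraic comparison of $(x'+1)^n+1$ with $(x'+\sqrt[n]{2})^n$. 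Your argument buys a real gain in rigor: the paper's monotonicity step rests on the informal claim that because the two $n$-th-root factors approach each other the difference of the two terms in its display grows, which is not a proof, and its endpoint/limit reasoning for the lower bound is similarly loose; your integrand bounds are uniform in $p$ and make the whole case analysis (including $p=0$) unnecessary. What the paper's route would buy, if completed, is the additional qualitative information that $Step$ increases in $p$ toward the limit $\sqrt[n]{2}$, which the author leans on informally later, but which is not needed for the lemma as stated.
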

\begin{proof}

%The low limit
We fix the parameter $x'$ and the parameter $p$ is growing. First of all let's show that the function $Step(x',p,n)$ grows with the growing of $p$.

%$$
%\frac{\partial Step(x',p,n)}{\partial p} = \frac{[(x'+p+1)^{n-1} + (p+1)^{n-1}][(x'+p)^{n-1} + p^{n-1}]^{(n-1)/n}}{\left(\left[(x'+p+1)^n + (p+1)^{n}\right] \left[(x'+p)^{n} + p^{n}) \right]\right)^{(n-1)/n}}
%$$

%where $A = (x'+p+1)^{n-1} + (p+1)^{n-1}$ and $B = (x'+p)^{n-1} + p^{n-1}$

%$$
%x' + p < z_{p}(x') < \sqrt[n]{2} \, (x' + p), \, \; \; \; p \geq 1,
%$$

\begin{equation}\label{11}
\begin{split}
Step(x',p,n) &= z_{p+1}(x',n) - z_{p}(x',n) = \\
&= \sqrt[n]{(x'+p+1)^{n} + (p+1)^{n}} - \sqrt[n]{(x'+p)^{n} + p^{n}} = \\
&= (p+1)\sqrt[n]{\left(1+\frac{x'}{p+1}\right)^n + 1} \; \; - \;\; p\sqrt[n]{\left(1+\frac{x'}{p}\right)^n + 1},
\end{split}
\end{equation}

where $p \ne 0$.

Let's signify a factor after $p+1$ as $I_{1}$ and a factor after $p$ as $I_{2}$, $I_{1} < I_{2}$. The more $p$ grows the more $x'/(p+1)$
becomes closer to $x'/p$. Hence the more $p$ grows the more $I_{1}$ becomes closer to $I_{2}$. Hence the difference between
the first component and the second one in \eqref{11} grows. It means that the function $Step(x',p,n)$ grows with the growing of $p$.

Now let's show that for $p=1,2,3, \ldots$ function $Step(x',p,n) < \sqrt[n]{2}$ .

%$$
%Step(x',p,n) < \sqrt[n]{2}.
%$$

Since we have shown that the function $Step(x',p,n)$ grows with the growing of $p$, we put $p \to \infty$ to find the high limit of this function.

$$
\lim_{p \to \infty} Step(x',p,n) = \lim_{p \to \infty} \left [\sqrt[n]{(x' + p + 1)^n + (p + 1)^n} - \sqrt[n]{(x' + p)^n + p^n} \;  \right ] =
$$

signify $x' = \alpha p$, $0 < \alpha \leq 1$

$$
= \lim_{p \to \infty} \left[ (p(1 + \alpha) + 1) \sqrt[n]{1 + \left[ \frac{p + 1}{p(1 + \alpha) + 1} \right]^n} - p(1 + \alpha) \sqrt[n]{1 + {\left[ \frac{1}{1 + \alpha} \right]}^n} \;  \right] =
$$

$$
= \lim_{p \to \infty} \left[ (p(1 + \alpha) + 1) \sqrt[n]{1 + \left[ \frac{1}{1 + \alpha} \right]^n} - p(1 + \alpha) \sqrt[n]{1 + {\left[ \frac{1}{1 + \alpha} \right]}^n} \;  \right] =
$$

\begin{equation}\label{12}
= \sqrt[n]{1 + \left[ \frac{1}{1 + \alpha} \right]^n}.
\end{equation}

Signify the expression in \eqref{12} like $f(\alpha)$. Since $f^{'}_{\alpha}(\alpha) < 0$, the expression in \eqref{12} reduces when $\alpha$ grows.
Hence $Step(x',p,n) < \sqrt[n]{2}$, $(\alpha \to 0)$.

Then let's show that for $p=1,2,3, \ldots$ function $Step(x',p,n) > 1$.

%$$
%1 < Step(x',p,n).
%$$

So function $Step(x',p,n)$ grows with the growing of $p$, it means that the low limit of $Step(x',p,n)$ reaches when $p=1$. In other words,
if we prove that $Step(x',1,n) > 1$ then $Step(x',p,n) > 1$ for $p > 0$.

$$
Step(x',1,n) = z_{2}(x',n) - z_{1}(x',n) = \sqrt[n]{(x' + 2)^n + 2^n} - \sqrt[n]{(x' + 1)^n + 1},
$$

Let's research functions $z_{2}(x',n)$ and $z_{1}(x',n)$ in the interval $x' \in [1, \infty)$ and the power $n$ is fixed. It is obviously that
these functions do not intersect each other. Also it is not hard to prove that the first and the second derivatives of these functions
are positive. It means that these functions grow and the velocity of their growing grows too. Taking these facts into account we can assert
that the function $Step(x',1,n)$ reaches its maximum and minimum values in the ends of the interval $x' \in [1, \infty)$.

%Also it can be shown by calculating the derivative of $Step(x',1,n)$, it is negative in the interval $x' \in [1, \infty)$.

Calculate the value of $Step(x',1,n)$ for $x' \to \infty$ to show that it is more than one.

$$
\lim_{x' \to \infty} \left[ \sqrt[n]{(x'+2)^n + 2^n} - \sqrt[n]{(x'+1)^n + 1} \; \right] =
$$

$$
\lim_{x' \to \infty} \left[ \; 1 + \frac{2^n}{n(x' + 2)^{n-1}} - \frac{1}{n(x' + 1)^{n-1}} + O \left( \frac{1}{ (x')^{2n-1} } \right) \; \right] > 1.
$$

Hence $Step(x',1,n) > 1$ and it means that $Step(x',p,n) > 1$ for $p=1,2,3, \ldots$. The same is right for $Step(x',0,n)$.

$$
Step(x',0,n) = \sqrt[n]{(x' + 1)^n + 1} - x' > 1.
$$

And the finale step - show that $Step(x',0,n) < \sqrt[n]{2}$,

$$
\sqrt[n]{(x' + 1)^n + 1} - x' < \sqrt[n]{2}.
$$

Obviously that

$$
\sum^{n-1}_{i=1} \binom{n}{i} (x')^{n-i} < \sum^{n-1}_{i=1} \binom{n}{i} (x')^{n-i} (\sqrt[n]{2})^i,
$$

Add $(x')^n + 2$ to the both parts of the inequality

$$
(x')^n + \sum^{n-1}_{i=1} \binom{n}{i} (x')^{n-i} + 1 + 1 < (x')^n + \sum^{n-1}_{i=1} \binom{n}{i} (x')^{n-i} (\sqrt[n]{2})^i + 2,
$$

$$
(x' + 1)^n + 1 < (x' + \sqrt[n]{2})^n,
$$

hence

$$
\sqrt[n]{(x' + 1)^n + 1} < x' + \sqrt[n]{2}.
$$

So we have the following

$$
1 < Step(x',p,n) < \sqrt[n]{2}, \;\; p = 0,1,2,3, \ldots \;.
$$

\end{proof}

It means that

\begin{equation}\label{13}
0 < \{ Step(x',p,n) \} < \sqrt[n]{2} - 1, \;\; p = 0,1,2,3, \ldots \;.
\end{equation}

In this case there is such $j\in\mathbb{N}$ :

$$
\begin{cases}
  \sum\limits^{j-2}_{k=0} \lbrace Step(x',k,n) \rbrace  < 1, \\
                                     \\
  \sum\limits^{j-2}_{k=0} \lbrace Step(x',k,n) \rbrace  +  \lbrace Step(x',j-1,n) \rbrace  \geq 1.
\end{cases}
$$

Hereby, if we start from integer $z_{0}(x',n) = x'$ then until $p$ reaches $j$ the number $z_{p}(x',n)$ can not be integer such

$$
\sum\limits^{j-2}_{k=0} \lbrace Step(x',k,n) \rbrace = \{ z_{j-1}(x',n) - z_{0}(x',n) \} < 1.
$$

Lets find the low limit of $j$ using \eqref{13}, when $p$ reaches $l\in\mathbb{N}$ :

$$
\begin{cases}
(l - 1)(\sqrt[n]{2} - 1) < 1, \\
l(\sqrt[n]{2} - 1) > 1,
\end{cases}
$$

$z_{l}(x',n)$  can already be an integer number. Hence the low limit of $j$ is $l$,

%$$
%p \, ( \! \sqrt[n]{2} - 1) > 1,
%$$

$$
l = \left\lceil \frac{1}{\sqrt[n]{2} - 1} \right\rceil, \; \; \; \; \; j \geq \left\lceil \frac{1}{\sqrt[n]{2} - 1} \right\rceil.
$$

Therefore we can put the following statements for Fermat's Last Theorem:

\newtheorem{statement}{Statement}
%������� ����� ��������� statement

                             \begin{statement}

                             %==Statement 1==

Fermat's Last Theorem is right for any $x, (x > y)$ and for every $y$ which satisfies the following inequality

$$
y <  \frac{1}{\sqrt[n]{2} - 1}, \, \, (x > y).
$$
                             \end{statement}

                             \begin{proof}

We start from integer $z_{0}(x',n) = x'$ and $p$ grows. It has been proven before that if $p$ does not reach $j$
then $z_{p}(x',n)$ can not be an integer number. Hence $z_{p}(x',n) = \sqrt[n]{(x' + p)^n + p^n}$ is not an integer number when

$$
p <  \frac{1}{\sqrt[n]{2} - 1}.
$$

If this is proven for $p$ hence it is proven for $y$ :

$$
y <  \frac{1}{\sqrt[n]{2} - 1}.
$$

So for any $x$ and for such $y$ the equation $x^n + y^n = z^n$, $(x > y)$ does not have integer roots.

                             \end{proof}

The higher power $n$ the more $y$ can be. It means that the method works more sensible with
the growing of power $n$.

                             \begin{statement}

                             %==Statement 2==

If the equation

\begin{equation}\label{14}
z^{n}_{p}(x',n) = (x' + p)^{n} + p^{n},
\end{equation}

where $x', p \geq 1$, has two integer roots $z_{k}(x',n)$ and $z_{k+i}(x',n)$ then

$$
i \geq \left\lceil \frac{1}{\sqrt[n]{2} - 1} \right\rceil.
$$

                             \end{statement}

                             \begin{proof}

We have $z_{k}(x',n)$ - the integer root of \eqref{14}, hence

$$
\left\{ \sum^{k-1}_{p=0} Step(x',p,n) \right\} = 0,
$$

%$$
%{Step(x',k,n)} = \{ z_{k+1}(x',n) - z_{k}(x',n) \} < \sqrt[n]{2} - 1.
%$$

By definition $i$ :

\begin{equation}\label{15}
\left\{ \sum^{k+i-1}_{p=k} Step(x',p,n) \right\} = 0,
\end{equation}

The sum in \eqref{15} consists of $i$ components $Step(x',p,n)$.
Taking into account \eqref{13} there is such $l\in\mathbb{N}$ :

$$
\begin{cases}
(l - 1)(\sqrt[n]{2} - 1) < 1, \\
l(\sqrt[n]{2} - 1) > 1.
\end{cases}
$$

It is obviously from \eqref{13} that the sum in \eqref{15} can not consist
of number of components less than $l$. Hence

$$
i \geq \left\lceil \frac{1}{\sqrt[n]{2} - 1} \right\rceil.
$$

%Suppose that $z_{k}(x',n)$ and $z_{k+i}(x',n)$ are integer roots of \eqref{14}, then by \eqref{13}
%$$
%i < z_{k+i}(x',n) - z_{k}(x',n) < \sqrt[n]{2} \, i,
%$$
%hence
%$$
%z_{k+i}(x',n) - z_{k}(x',n) - i < ( \! \sqrt[n]{2} - 1)i,
%$$
%but $z_{k+i}(x',n) - z_{k}(x',n) - i \geq 1$, therefore
%$$
%i > \frac{1}{\sqrt[n]{2} - 1}.
%$$

                             \end{proof}

                             \begin{statement}

                             %==Statement 3==

When $\{ z_{k}(x',n) \}$ is less than $( \! \sqrt[n]{2} - 1)j < 1$, $j\in\mathbb{N}$,
then $z_{k+i}(x',n)$ can be the integer root of \eqref{14} if

$$
i \geq \left\lceil \frac{1}{\sqrt[n]{2} - 1} \right\rceil - j .
$$

                             \end{statement}

                             \begin{proof}

We have $\{ z_{k} (x',n) \} < (\sqrt[n]{2} - 1)j$, it means that

%$$
%\{ z_{k} (x',n) \} < (\sqrt[n]{2} - 1)j,
%$$

%it means that

$$
\left\{ \sum^{k-1}_{p=0} Step(x',p,n) \right\} < (\sqrt[n]{2} - 1)j.
$$

If $z_{k+i} (x',n)$ is the integer root of \eqref{14}, then there is $i$ :

\begin{equation}\label{16}
\left\{ \sum^{k-1}_{p=0}  Step(x',p,n) \right\} + \left\{ \sum^{k+i-1}_{p=k}  Step(x',p,n) \right\} = 1.
\end{equation}

Taking into account \eqref{13} there is such $l\in\mathbb{N}$ :

$$
\begin{cases}
(l - 1 + j)(\sqrt[n]{2} - 1) < 1, \\
(l + j)(\sqrt[n]{2} - 1) > 1.
\end{cases}
$$

$$
l + j = \left\lceil \frac{1}{\sqrt[n]{2} - 1} \right\rceil.
$$

It is obviously from \eqref{13} that the second sum in \eqref{16} can not consist
of number of components less than $l$. Hence $i \geq l$ and

$$
i \geq \left\lceil \frac{1}{\sqrt[n]{2} - 1} \right\rceil - j.
$$

%We have $( \! \sqrt[n]{2} - 1)l < 1$, $l\in\mathbb{N}$. It means that there is such a number $i'$, $i'\in\mathbb{N}$,
%: $( \! \sqrt[n]{2} - 1)i' > 1$ and $( \! \sqrt[n]{2} - 1)(i' - 1) < 1$. Hence

%$$
%( \! \sqrt[n]{2} - 1)(i' - l + l) > 1, \, \; \; \; \;  i' - l = i,
%$$

%$$
%i > \frac{1}{\sqrt[n]{2} - 1} - l.
%$$

                             \end{proof}

%\phantom{x}
We can say that $z^{n}_{p}(x',n)$ from \eqref{14} is a branch of Diophantine equation $z^{n} = x^{n} + y^{n}$ and
the fragmentation into branches is carried out by $p$ parameter.

This method is applicable to the whole kind of Diophantine equations. The more power $n$ the more effective this
method works.

There is a problem in the application of this method to various Diophantine equations, it is a
necessity to find the limits of function $Step (x',  ... \; p , n)$ for each case. For example let's take the
following Diophantine equation

$$
z^{n} = A x^{n} + y^{n}, \; \; x > y, \; \; A \in\mathbb{N}.
$$

If we construct for this Diophantine equation an equation like \eqref{10} it will hint that the high limit of
$Step(x', p, n)$ is equal to $\sqrt[n]{A + 1}$. But it has to be proven for this Diophantine equation like for
the previous one. How not to prove it in any case? Is there any way to ease this procedure? Maybe there is a
way to avoid this procedure using \eqref{1} and \eqref{10} to find the limits of \emph{Step}-function.

\phantom{x}
\phantom{x}

\phantom{x}

\textit{E-mail address}: \texttt{khantarzhiev@gmail.com}

\phantom{x}

\textsc{Moscow, Russia}

\end{document}